\newtheoremstyle{dotless}{}{}{\itshape}{}{\bfseries}{}{ }{}
\theoremstyle{dotless}
\newtheorem{proposition}{Proposition}[section]
\newtheorem{corollary}{Corollary}[section]
\newtheorem{theorem}{Theorem}[section]
\newtheorem{lemma}{Lemma}
\theoremstyle{definition}
\DeclareMathOperator{\supp}{supp}
\def\supp{\operatorname{supp}}
\newcommand{\al}{\alpha}
\newcommand{\eps}{\varepsilon}
\newcommand{\la}{\lambda}
\newcommand{\vf}{\varphi}
\newcommand{\om}{\omega}
\newcommand{\cE}{\mathcal E}
\newcommand{\bC}{\mathbb C}
\newcommand{\bI}{\mathbb I}
\newcommand{\bV}{\mathbb V}
\newcommand{\bZ}{\mathbb Z}
\newcommand{\one}{\mathbf{1}}
\newtheorem{remark}[theorem]{Remark}
\begin{document}

\title[Improved surrogate maximum principle]%
{Improved surrogate bi-parameter maximum principle}
\author[P. Mozolyako]{Pavel Mozolyako}
\thanks{PM is supported by the Russian Science Foundation grant 17-11-01064}
\address{Department of Mathematics and Computer Science, Saint Petersburg State University, Saint Petersburg, 199178, Russia}
\email{pavel.mozolyako@unibo.it}
\author[G.~Psaromiligkos]{Georgios Psaromiligkos}
\address{Department of Mathematics, Michigan Sate University, East Lansing, MI. 48823}\email{psaromil@math.msu.edu \textrm{(G.\ Psaromiligkos)}}
\author[A.~Volberg]{Alexander Volberg}
\thanks{AV is partially supported by the NSF grant  DMS 1900268 and by Alexander von Humboldt foundation}
\address{Department of Mathematics, Michigan Sate University, East Lansing, MI. 48823}
\email{volberg@math.msu.edu \textrm{(A.\ Volberg)}}
\author[P.~Zorin-Kranich]{Pavel Zorin-Kranich}
\thanks{PZ was partially supported by the Hausdorff Center for Mathematics (DFG EXC 2047)}
\address[P.~Zorin-Kranich]{Mathematical Institute, University of Bonn, Bonn, Germany}
\email{pzorin@uni-bonn.de}
\keywords{Bi-parameter maximum principle, b-trees, bi-parameter Carleson embedding}
\makeatletter
\@namedef{subjclassname@10010}{
  \textup{2010} Mathematics Subject Classification}
\makeatother
\subjclass[2010]{42B100, 42B35, 42B99, 47A99, 47A100}
\begin{abstract} 
Logarithmic potentials and many other potentials satisfy maximum principle. The dyadic version of logarithmic potential can be easily introduced, it lives on dyadic tree and also satisfies maximum principle. But its analog on bi-tree does not have this property. We prove here that ``on average'' we can still have something like maximum principle on bi-tree.
\end{abstract}
\maketitle

\section{Potentials and maximum principle}
\label{pot}

Let us recall some basic facts on logarithmic potential. For a compact Dirichlet regular set $E$ on the complex plane  (in particular $E$ has positive logarithmic capacity $\text{cap}(E)$) there exists unique positive measure $\mu$  on $E$ such that
$$
\int\log\frac1{|z-\zeta|} d\mu(\zeta) \equiv 1,
\quad z=x+iy\in E\,.
$$
And potential $U^\mu(z):= \int\log\frac1{|z-\zeta|} d\mu(\zeta) $  also satisfies
$$
U^\mu(z) \le 1 \quad \forall z\in \bC\,.
$$
For such a measure, its normalization $\mu/|\mu|$  is called capacitary measure of $E$ and $|\mu|=\text{cap}(E)$.

For any positive measure $\nu$ on $E$ the maximal principle holds: for 
$$
U^\nu(z)=\int\log\frac1{|z-\zeta|} d\nu(\zeta)\quad\,\,\zeta=\xi+i\eta, z= x+iy
$$
 we have
$$
U^\nu(z) \le \sup_{z\in E} U^\nu(z), \quad \forall z\in \bC\,.
$$

However, in the area of complex analysis dealing with Hardy spaces in the poly-disc considered in \cite{AMPS}, \cite{AHMV}, \cite{AMPVZ}, \cite{MPVZ} the following very different potential  (or its dyadic version)
appears absolutely naturally and plays a vital part  of poly-disc theory:
$$
V^\nu (x, y) =\int_E\log\frac1{|x-\xi|} \cdot\log\frac1{|y-\eta|} d\nu(\xi, \eta)\,.
$$
It is a bi-parameter potential. This a very unusual potential and there is no maximum principle. The reader can be familiarized with this kind of potential theory (along with the classical one) through the book \cite{AH}.

In \cite{MPV} we built several examples that  demonstrate how crucially multi-parameter potential theory is different from the usual one. 

But a certain ``shadow'' of the maximum principle is still preserved. We used a certain surrogate maximum principle that turned out to be vital for our multi-parameter Carleson embedding theorems ($\equiv$weighted multi-parameter paraproducts theorems) in \cite{AHMV}, \cite{AMPVZ}, \cite{MPVZ}, \cite{MPVZ1}.

We feel that our method of proving the surrogate maximum principle is a certain variant of a convex optimization. But we did not manage to cast it in this language, and, instead we use trick after trick to get it.

In the current paper we improve the dyadic surrogate maximum principle found in \cite{AMPVZ} and \cite{MPVZ}, \cite{MPVZ1}, so everything happens on a direct product of dyadic trees, we call it a bi-tree and the symbol for it is $T^2$. 

\section{Surrogate maximal principle on a bi-tree}
\label{surr}


Consider a bi-tree $T^2= T\times T$, where $T$ is a finite (but unboundedly large) simple dyadic tree. Let $\al=(\al_1, \al_2)$ be the notation for the node of $T^2$ and
 Hardy  operator be defined
 $$
 \bI f (\al): = \sum_{\al' \ge \al} f(\al'),
 $$
 where $\ge$ is a natural partial order on $T^2$. This is ``summing-up along bi-tree''. The summing-down is a formal conjugate operator:
 $$
 \bI^* f(\al):= \sum_{\al' \le \al} f(\al').
 $$
  Let  $\mu$ be a non-negative function on $T^2$ (on a finite graph there is no difference between non-negative functions and measures).
 The potential $\bV^\mu$ is defined as follows
 $$
 \bV^\mu(\al) := \bI(\bI^* \mu)(\al) =\sum_{\al'\ge \al} \sum_{\gamma \le \al'} \mu(\gamma)\,.
$$
$$
E_s :=\{\al\in T^2: \bV^\mu(\al) \le s\}\,.
$$
More notations:
 $$
 \bV^\mu_\delta(\al) := \bI(\one_{E_\delta}\bI^* \mu)(\al) =\sum_{\al'\ge \al:  \bV^\mu(\al') \le \delta} \,\sum_{\gamma \le \al'} \mu(\gamma)\,.
$$
$$
\cE[\mu] :=\int \bV^\mu d\mu=\int_{T^2} (\bI^*\mu)^2\,.
$$
$$
\cE_\delta[\mu] :=\int \bV^\mu_\delta d\mu=\int_{T^2}\one_{E_\delta} (\bI^*\mu)^2\,.
$$

The same operators exist obviously on the simple dyadic tree, and we call them $I, I^*$. The potential of measure $\mu$ 
on $T$ will be denoted by $V^\mu$. We also define
$$
V^\mu_\delta(a) := \sum_{a'\ge a: \sum_{a''\ge a'} I^*\mu \le \delta} I^*\mu(a')\,.
$$

The maximum principle (at least one variant of it) on a simple tree is the following obvious inequality valid for all non-negative $h$ on a simple tree:
\begin{equation}
\label{maxE}
\max Ih =\max_{\supp h} Ih\,.
\end{equation}
(This is not true in general on a  bi-tree.)

Denoting $h(a'):= (I^*\mu)(a')$ we can use \eqref{maxE} equality to write
\begin{equation}
\label{simpleTREE0}
I(\one_{Ih \le \delta} \cdot h) \le \max_{Ih\le \delta}I(\one_{Ih \le \delta} \cdot h)\le   \delta\,,
\end{equation}
and, hence,  to conclude another obvious thing:
$$
V^\mu_\delta \le \delta
$$
uniformly on a simple tree, and so
\begin{equation}
\label{1param}
\int V^\mu_\delta d\mu \le \delta |\mu|\,.
\end{equation}

Potential $V^\mu$ can be considered as a dyadic version of $\int_0^1 \log\frac1{|x-y|} d\mu(y)$. Notice also that an obvious equality \eqref{maxE} gives the following variant of maximum principle for potentials $V^\mu$:
\begin{proposition}[Maximum principle for potentials on a simple tree]
\label{maxP}
Let $\mu$ be a positive measure on $T$ and $V^\mu\le 1$ on $\supp \mu$. Then $V^\mu \le 1$ everywhere on $T$. 
\end{proposition}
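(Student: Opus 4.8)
The plan is to combine the elementary maximum principle \eqref{maxE} with the monotonicity of $V^\mu$ along the tree order. Write $h := I^*\mu$, a nonnegative function on $T$, so that $V^\mu = Ih$. Applying \eqref{maxE} to $h$ will give immediately
\[
\max_{T} V^\mu \;=\; \max_{\supp h} V^\mu \;=\; \max_{\supp(I^*\mu)} V^\mu ,
\]
so it will suffice to prove $V^\mu(a)\le 1$ for every $a\in\supp(I^*\mu)$.

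The step that needs care is that $\supp(I^*\mu)$ is in general strictly larger than $\supp\mu$: one has $a\in\supp(I^*\mu)$ exactly when $\sum_{a''\le a}\mu(a'')>0$, i.e. when there is an atom $\gamma\le a$ of $\mu$, and such an $a$ need not itself lie in $\supp\mu$. Thus the hypothesis $V^\mu\le 1$ on $\supp\mu$ cannot be applied at $a$ directly; one first has to descend from $a$ to a point of $\supp\mu$. I would supply this descent by observing that $V^\mu$ is non-increasing in the partial order: if $b\ge a$ then $\{c:c\ge b\}\subseteq\{c:c\ge a\}$ by transitivity, and since $h\ge 0$,
\[
V^\mu(b)=\sum_{c\ge b}h(c)\le\sum_{c\ge a}h(c)=V^\mu(a).
\]

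With this in hand the argument closes quickly. Given $a\in\supp(I^*\mu)$ (the case $\mu\equiv 0$ being trivial), pick $\gamma\le a$ with $\gamma\in\supp\mu$; monotonicity gives $V^\mu(a)\le V^\mu(\gamma)$, and the hypothesis gives $V^\mu(\gamma)\le 1$. Hence $V^\mu\le 1$ on $\supp(I^*\mu)$, and the displayed identity then forces $V^\mu\le 1$ on all of $T$. I do not expect a real obstacle: the only subtle point is the gap between $\supp\mu$ and $\supp(I^*\mu)$, which is exactly what the monotonicity step bridges, and the only nontrivial input is \eqref{maxE} --- which, as the paper stresses, is precisely the ingredient with no analogue on the bi-tree $T^2$, and hence the reason the later sections must work with a surrogate statement.
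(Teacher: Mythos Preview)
Your proof is correct and follows essentially the same approach as the paper's: both set $h=I^*\mu$, invoke \eqref{maxE} to reduce the problem to $\supp h$, and then use the description $\supp h=\{a:\exists\,\gamma\le a,\ \gamma\in\supp\mu\}$ together with the monotonicity of $V^\mu$ along the tree order to pass from $\supp\mu$ to $\supp h$. Your write-up is in fact cleaner: you make the monotonicity step explicit where the paper compresses it into the phrase ``by definition,'' and you stop as soon as $V^\mu\le 1$ on $\supp h$ is established, whereas the paper takes an unnecessary further detour through the inequality $\one_E\,Ih\le I(\one_E h)$ and \eqref{simpleTREE0}.
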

\begin{proof}

Let $h=I^*\mu$, let $E=\supp h$. Then $E=\{a\in T: \exists b\le a, b\in \supp\mu\}$.   By definition $E\subset \{V^\mu\le 1\}=\{ Ih\le 1\}$. 
By \eqref{maxE} we write $\max Ih =\max_{\supp h} Ih=\max\one_E Ih $, $\one_E Ih \le I(\one_E h)$ (this is true for any up-set on any multi-tree), and so $\le  I(\one_{Ih\le 1} h) \le 1$ by  \eqref{simpleTREE0}. 
\end{proof}

\begin{remark}
\label{counter}
On a bi-tree neither this maximum principle nor \eqref{simpleTREE0} or \eqref{maxE} or \eqref{1param} would be  true anymore.  On can find an example in Section \ref{ce} below or in \cite{MPV}. But on average we have only slightly worse that \eqref{1param}. Namely, we have the following result below.
\end{remark}

\begin{theorem}
\label{eps}
Let $0<|\mu|\le \cE[\mu]<\infty$. Let $\delta>0$ and $\tau>0$. Then there exists $C_\tau<\infty$ such that
$\cE_{\delta}[\mu] \le C_\tau \delta^{1-\tau} |\mu|^{1-\tau} \cE[\mu]^\tau$.
\end{theorem}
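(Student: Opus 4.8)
The plan is to run an inductive/iterative scheme on the threshold parameter, exploiting the only tool genuinely available on a bi-tree: the one-parameter maximum principle applied in each variable separately, combined with the energy identity $\cE[\mu]=\int(\bI^*\mu)^2$. I would start by recording the trivial bounds $\cE_\delta[\mu]\le\cE[\mu]$ and $\cE_\delta[\mu]\le\delta|\mu|$; the first holds because $\bV^\mu_\delta\le\bV^\mu$ pointwise, and the second because on $E_\delta=\{\bV^\mu\le\delta\}$ we have $\bV^\mu_\delta\le\bV^\mu\le\delta$, while off $E_\delta$ one checks that the contribution still does not exceed $\delta$ by summing along chains. Interpolating these two estimates gives $\cE_\delta[\mu]\le(\delta|\mu|)^{1-\tau}\cE[\mu]^\tau$ — but that is too cheap and presumably not quite the claim, so the real work is to get the constant $C_\tau$ while improving the structure; more likely the correct reading is that one must bootstrap a weak self-improving inequality into the stated power.

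The core step I would carry out: split $\bV^\mu_\delta$ according to the ``level'' at which a node $\alpha'\ge\alpha$ enters $E_\delta$. Write $\mu=\sum_k \mu_k$ where $\mu_k$ lives where $\bV^\mu$ is comparable to $2^{-k}\delta$ (a Calder\'on--Zygmund-type stopping decomposition on the bi-tree, using that $\bV^\mu$ is monotone along the partial order). On each piece, apply the one-parameter maximum principle \eqref{simpleTREE0} first in the first tree variable with the second frozen, then in the second, picking up a factor of $\delta$ each time but losing a controlled amount because the freezing destroys the clean up-set structure — this is exactly the ``trick after trick'' the introduction warns about. The gain is that $\int_{E_\delta}(\bI^*\mu)^2$ gets compared to $\delta$ times a quantity that is itself bounded by a fractional power of $\cE[\mu]$, because the overlap of the stopping regions is summable in $k$ with a geometric (hence $\tau$-dependent through the number of effective scales $\sim\tau^{-1}\log(\cE[\mu]/(\delta|\mu|))$) constant.

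Concretely, the iteration: let $A(\delta):=\cE_\delta[\mu]$ and establish a recursive inequality of the form $A(\delta)\le c\,\delta|\mu| + c\,A(\delta/2)^{\theta}A(\infty)^{1-\theta}$ for some fixed $\theta\in(0,1)$ and absolute $c$, valid because the part of the potential coming from nodes that already lay below level $\delta/2$ is handled by the one-parameter principle, and the remaining part is estimated by Cauchy--Schwarz against the full energy. Unwinding this recursion $N\sim\tau^{-1}$ times and absorbing, one reaches $A(\delta)\le C_\tau\,\delta^{1-\tau}|\mu|^{1-\tau}\cE[\mu]^{\tau}$; the constant $C_\tau$ blows up as $\tau\to0$, consistent with the failure of the clean maximum principle \eqref{1param} on $T^2$.

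The main obstacle — and where I expect the genuine content to lie — is the freezing step: applying \eqref{maxE}/\eqref{simpleTREE0} one variable at a time, the indicator $\one_{E_\delta}$ is \emph{not} a product set and is not an up-set in either single variable, so one cannot directly dominate $\one_{E_\delta}\bI^*\mu$ by something to which the one-parameter argument applies. Overcoming this requires either (a) replacing $E_\delta$ by a larger product-type superset at the cost of a summable error, or (b) a two-variable stopping-time construction where one peels off maximal sub-bi-trees on which the potential is controlled in one variable, handles them with Proposition~\ref{maxP} fiberwise, and iterates in the other variable; keeping the bookkeeping of these nested stopping regions summable (which is what forces the $\delta^{1-\tau}$ rather than $\delta^1$) is the crux. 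The energy hypothesis $|\mu|\le\cE[\mu]$ is what guarantees the logarithm $\log(\cE[\mu]/(\delta|\mu|))$ has a sign in the regime that matters, so that the number of iteration steps is genuinely finite and $C_\tau$ depends on $\tau$ alone.
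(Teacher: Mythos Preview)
There is a genuine gap in your proposal, and it begins with the ``trivial'' bound $\cE_\delta[\mu]\le\delta|\mu|$. On $T^2$ this is \emph{false}: it is precisely \eqref{1param}, which the paper stresses (Remark~\ref{counter} and Section~\ref{ce}) fails on a bi-tree. Your justification ``off $E_\delta$ one checks that the contribution still does not exceed $\delta$ by summing along chains'' is exactly the one-parameter argument \eqref{simpleTREE0}, and it breaks because above a point $(\beta_0,\al_0)\notin E_\delta$ the set $\{\al'\ge(\beta_0,\al_0):\al'\in E_\delta\}$ is not a chain; the counterexample in Section~\ref{ce} shows $\bV^\mu_\delta$ can exceed $\delta\log N$. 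If that bound held, interpolation would give the theorem with $C_\tau=1$ and the paper would be unnecessary --- so your instinct that this is ``too cheap'' is right, but the reason is that the bound itself is wrong, not that the interpolation is loose.

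Your recursive scheme also goes in the wrong direction. You propose $A(\delta)\le c\,\delta|\mu|+c\,A(\delta/2)^\theta A(\infty)^{1-\theta}$; but $A$ is increasing, so iterating toward smaller thresholds cannot gain anything. The paper's actual recursion is \emph{upward}: for $\la\ge 6\delta$,
\[
\cE_\delta[\mu]\lesssim \frac{\delta}{\la}\,\cE_\la[\mu]+\la|\mu|,
\]
and this is obtained not by fiberwise stopping but by constructing an explicit test function $\vf=\la^{-1}\bigl(I_1 m\cdot I_2 n+I_2 m\cdot I_1 n\bigr)\one_{E_{3\la}\setminus E_\delta}$ (with $m=\one_{E_\delta}\bI^*\mu$, $n=\bI^*\mu$) that majorizes $\bV^\mu_\delta$ where it is large, is supported in $E_{3\la}$, and has $\int\vf^2\lesssim(\delta/\la)\cE_\delta[\mu]$. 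The energy estimate for $\vf$ uses a superadditivity lemma (Lemma~\ref{2.1}) in one tree variable --- this is the only place the one-parameter structure enters, and it is not the fiberwise \eqref{maxE} you describe. Cauchy--Schwarz then gives a quadratic inequality in $\cE_\delta[\mu]^{1/2}$, yielding the recursion; iterating it with geometrically growing $\la$ produces $C_\tau\asymp c_0^{1/\tau}$. None of this machinery --- the specific bilinear $\vf$, its support property, or Lemma~\ref{2.1} --- appears in your plan, and the obstacle you correctly identify (that $E_\delta$ is not a product set) is precisely what this construction circumvents.
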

First we need a lemma.
\begin{lemma}
\label{lambda}
Let $0<\delta\le \lambda/6$. Then we can find function $\vf$ non-negative on $T^2$ such that
\begin{enumerate}
\item The domain of majorization: $\bI \vf  \ge \frac14\bV^\mu_\delta$, where $\bV^\mu_\delta \ge 40 \lambda$;
\item  Support of majorant: $\supp \vf \subset E_{3\lambda}\setminus E_\delta$;
\item  Energy estimate: $\int_{T^2} \vf^2 \le A_0 \frac{\delta}{3\lambda} \cE_{\delta}[\mu] $.
\end{enumerate}
\end{lemma}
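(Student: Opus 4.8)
The plan is to realize $\bV^\mu_\delta$ as a Hardy sum $\bI g$ of a charge $g$ living near the leaves of the bi-tree, and then to \emph{push that charge down} into the annulus $A:=E_{3\lambda}\setminus E_\delta$, losing only a bounded factor in the pointwise estimate while gaining the factor $\delta/\lambda$ in energy. Put $g:=\one_{E_\delta}\bI^*\mu$, so that $\bV^\mu_\delta=\bI g$, $\supp g\subset E_\delta$, and $\int_{T^2}g^2=\cE_\delta[\mu]$. Two structural facts set the stage. First, since $\bV^\mu_\delta=\bI(\one_{E_\delta}\bI^*\mu)\le\bI(\bI^*\mu)=\bV^\mu$, the set to be majorized $D:=\{\bV^\mu_\delta\ge 40\lambda\}$ is contained in $T^2\setminus E_{3\lambda}$; hence no node of $A$ lies below a node of $D$, and $A$ is sandwiched, in the bi-tree order, between $D$ (near the root, where $\bV^\mu$ is large) and $\supp g$ (near the leaves), so the whole problem is to transport $g$ across $A$. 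Second, across $A$ the potential $\bV^\mu$ runs through the window $(\delta,3\lambda]$, whose multiplicative width is $\ge 18$ because $\delta\le\lambda/6$; this is the reservoir of room that produces the gain $\delta/\lambda$ in item (3).

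Concretely, for each $\gamma$ with $g(\gamma)>0$ I would select a chain $\cC(\gamma)\subset A$ of ancestors of $\gamma$ that stays above the part of $D$ lying below $\gamma$ (a natural floor being the coordinatewise join of $D\cap\{\,\cdot\le\gamma\,\}$, which makes sense since the coordinates of those nodes form chains), of length $N_\gamma$, and set $\vf:=\sum_\gamma\frac{g(\gamma)}{N_\gamma}\,\one_{\cC(\gamma)}$. Then $\vf\ge0$ and $\supp\vf\subset A$, which is item (2). For item (1): if $\al\in D$, then each successfully placed chain $\cC(\gamma)$ with $\gamma\ge\al$ lies entirely in $\{\,\cdot\ge\al\,\}$ by the choice of floor, so $\bI\vf(\al)$ recovers the full charge $g(\gamma)$ for every such $\gamma$; what remains is to show that the $\gamma\ge\al$ for which the placement \emph{fails} carry at most $\tfrac34\bV^\mu_\delta(\al)$ (then $\bI\vf(\al)\ge\tfrac14\bV^\mu_\delta(\al)$, which is where the $\tfrac14$ comes from). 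For item (3): organizing the chains $\cC(\gamma)$ by a stopping-time/laminar selection inside $A$ makes them almost orthogonal, so $\int_{T^2}\vf^2\lesssim\sum_\gamma g(\gamma)^2/N_\gamma$; since each placed chain traverses a $\bV^\mu$-window of length $\asymp\lambda$ in increments controlled by $\delta$, one has $N_\gamma\gtrsim\lambda/\delta$, whence $\int_{T^2}\vf^2\lesssim\frac{\delta}{\lambda}\sum_\gamma g(\gamma)^2=\frac{\delta}{\lambda}\cE_\delta[\mu]$, which is item (3) with an absolute $A_0$.

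The main obstacle is the ``exceptional'' charge: the $\gamma$'s below which $A$ is effectively empty because $\bV^\mu$ plunges from above $3\lambda$ to below $\delta$ in a single generation --- on a ``Carleson box'' carrying $\bV^\mu$-mass $\ge 3\lambda-\delta$ --- leaving no node of $A$ available to carry it. One must either show that this charge is at most $\tfrac34\bV^\mu_\delta(\al)$ for every $\al\in D$ (this is where the quantitative gap between $40\lambda$ and $3\lambda$, together with $\delta\le\lambda/6$, has to be spent), or reroute it through the other coordinate's parent without breaking the support or the $L^2$ bound; making the constants $40$, $3$, $\tfrac16$, $\tfrac14$, $A_0$ close simultaneously in all cases is the ``trick after trick'' referred to in the introduction. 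A secondary technical point is that in the bi-tree the lowest node of $A$ below a given $\gamma$ is a whole staircase antichain rather than a single node, so the laminar selection of the $\cC(\gamma)$ and the lower bound $N_\gamma\gtrsim\lambda/\delta$ both have to be carried out along staircases.
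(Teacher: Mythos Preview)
Your proposal is not a proof but a strategic sketch, and the strategy is genuinely different from the paper's; unfortunately several of the steps you list as ``would follow'' do not, and the gaps are exactly where the bi-tree differs from a simple tree.

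The paper does \emph{not} redistribute the charge $g=\one_{E_\delta}\bI^*\mu$ along chains. It writes down an explicit product formula
\[
\vf=\frac{1}{\lambda}\,I_1 m\cdot I_2 n\cdot\one_{E_{3\lambda}\setminus E_\delta}
+\frac{1}{\lambda}\,I_2 m\cdot I_1 n\cdot\one_{E_{3\lambda}\setminus E_\delta},
\qquad m=\one_{E_\delta}\bI^*\mu,\ n=\bI^*\mu,
\]
and then exploits the factorisation $\bI=I_1I_2$ to apply \emph{one-dimensional} maximum-principle facts in each coordinate separately. The majorization (item~1) comes from the elementary 1D estimate $I_1(N\one_{\delta<I_1N\le3\lambda})\ge\tfrac32\lambda-2\delta$ for $N=I_2n$, together with a case analysis on whether a certain join lies in $E_\delta$. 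The energy bound (item~3) comes from a 1D superadditivity lemma (Lemma~\ref{2.1} in the paper) feeding into a Schur-type operator bound (Lemma~\ref{2.2}); this is where both the factor $3\lambda$ and the factor $\delta$ are extracted, each from a different 1D computation.

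Your chain approach, by contrast, treats $T^2$ as a generic poset and forfeits the product structure. Two concrete problems:

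\emph{Chain length.} You claim $N_\gamma\gtrsim\lambda/\delta$ because ``increments are controlled by $\delta$''. They are not. Inside $A=E_{3\lambda}\setminus E_\delta$ the increment of $\bV^\mu$ along a single edge equals an $\bI^*\mu$-value of a node in $A$, which can be as large as $3\lambda$; there is no reason it should be $\lesssim\delta$. So a chain may cross all of $A$ in a single step --- precisely the ``exceptional'' case you mention but never control. In 1D this is harmless because $\bV^\mu_\delta\le\delta$ globally, but here it is the whole difficulty.

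\emph{Almost-orthogonality.} On a bi-tree a node of $A$ can lie below many incomparable $\gamma\in E_\delta$, so the chains $\cC(\gamma)$ need not be laminar or disjoint in any useful sense; the asserted bound $\int\vf^2\lesssim\sum_\gamma g(\gamma)^2/N_\gamma$ has no proof and is not obviously true. The paper's energy argument avoids this entirely: it never decomposes by $\gamma$, but instead bounds $\int(I_1m)^2(I_2n)^2\one_{E_{3\lambda}}$ directly via the two lemmas above, using superadditivity of $I_2n$ in the second variable (a 1D fact) to pull out $3\lambda$, and the 1D maximum principle on $E_\delta$ to pull out $\delta$.

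In short, both the majorization and the energy parts of your sketch rely on heuristics that are true on a simple tree but fail on $T^2$; the paper's route succeeds exactly because it reduces each estimate to a pair of 1D computations via the explicit product form of $\vf$.
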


\begin{remark}
\label{instead}
In fact, instead of 1) we will prove the following statement that implies 1).
Everywhere on $T^2$,  we have
$$
\bI \vf + 10\lambda \ge \frac12\bV^\mu_\delta\,.
$$
\end{remark}

\subsection{Discussion of Lemma \ref{lambda}}
Lemma 4.10 of \cite{MPVZ1} claims ``almost'' the same. But there are two very delicate differences. The first difference is that although in Lemma 4.10 the majorization claim 1) 
is present, but in a weaker form: the similar majorization happens there on the set $\{\lambda \le \bV^\mu_\delta \le 2 \lambda\}$. 

The fact that $40 \lambda$ happens instead of $\lambda$ is immaterial, but the fact that Lemma 4.10 has also the estimate from above $\bV^\mu_\delta \le 2 \lambda$ on the domain of majorization makes Lemma 4.10 much weaker than Lemma \ref{lambda} above.

But we should also mention that by restricting majorization condition 1) to a ``smaller'' set  Lemma 4.10  allowed us to have much better  energy estimate 3): $\int_{T^2} \vf^2 \le A_0 \frac{\delta^2}{\lambda^2} \cE_{\delta}[\mu] $.

Such an estimate in our Lemma \ref{lambda} is not possible on a ``larger'' domain of majorization $\bV^\mu_\delta \ge 40 \lambda$ that does not include the upper bound  on potential $\bV^\mu_\delta$ (if we also want to keep the support claim 2)).

The attentive reader should be warned that Lemma 4.10 allows us to get an estimate of type 1) on a set $\bV^\mu_\delta \ge 40 \lambda$ without an upper bound  on potential $\bV^\mu_\delta$, 
and even with the energy estimate $\int_{T^2} \vf^2 \le A_0 \frac{\delta^2}{\lambda^2} \cE_{\delta}[\mu] $. But the price to pay is to throw away the support claim 2) of Lemma \ref{lambda}.

\medskip

We are saying this to emphasize that even though Lemma 4.10 of \cite{MPVZ1} and Lemma \ref{lambda} above look ``the same'', they are actually very different, and they have quite different proofs.

This also explains that on tri-tree $T^3$ we do not have so far an analog of Lemma \ref{lambda}, but we have the analog of Lemma 4.10, that is Lemma 4.17 of \cite{MPVZ1}.

\medskip

What consequences have these subtle delicate differences between Lemmas and $T^2$ and $T^3$? Here they are.
For example, Lemma 4.10 was needed to prove on bi-tree $T^2$ the estimate
$$
\cE_\delta[\mu] \lesssim \delta^{2/3} |\mu|^{2/3} \cE[\mu]^{1/3}\,.
$$
As we can see in Theorem \ref{eps}, this $2/3$ can be replaced by any $1-\tau, \tau>0$. On $T^3$ we do not have the analog of Lemma \ref{lambda} so far, and this is the reason we cannot improve the estimate of Lemma 4.20 of \cite{MPVZ1} on $T^3$:
$$
\cE_\delta[\mu] \lesssim \delta^{1/2} |\mu|^{1/2} \cE[\mu]^{1/2}\,.
$$
Again the reader should compare this with absolutely trivial estimate \eqref{1param} on a simple tree. We do not have any estimate of the type 
$$
\cE_\delta[\mu] \lesssim \delta^{\eps} |\mu|^{\eps} \cE[\mu]^{1-\eps}\,.
$$
with any positive $\eps$ on $T^n$, $n\ge 4$. This prevents us to extend the results of \cite{MPVZ}, \cite{MPVZ1} to higher dimension. In particular, Carleson embedding theorem on $T^n$, $n\ge 4$, is not known.

\medskip

\subsection{From Lemma \ref{lambda} to Theorem \ref{eps}}
Lets see how Lemma \ref{lambda} implies Theorem \ref{eps}. By the first part of Lemma  \ref{lambda} we conclude:
$$
\frac{1}{4}\cE_\delta[\mu]  \le 40\lambda|\mu| + \int \bI \vf \, d\mu = 40\lambda|\mu| + \int_{E_{3\lambda}}  \vf \, \bI^*\mu  \le  40\lambda|\mu| +\big(\int \vf^2\big)^{1/2} \cE_{3\lambda}[\mu]^{1/2}\,.
$$
So
$$
\cE_\delta[\mu]  \le  2 A_0^{1/2}\Big(\frac{\delta}{3\lambda}\Big)^{1/2}\big(\cE_\delta[\mu]\big)^{1/2} \cE_{3\lambda}[\mu]^{1/2} +C\lambda|\mu|\,.
$$
We solve this quadratic inequality with respect to $x:=  \big(\cE_\delta[\mu]\big)^{1/2}$ to obtain
$$
 \cE_\delta[\mu] \le 10A_0\Big(\frac{\delta}{3\lambda}\Big) \cE_{3\lambda}[\mu]  + A_1\lambda |\mu|\,.
 $$
We have for any $\delta,\lambda>0$ with $\delta\leq \dfrac{\lambda}{6}$:

\begin{equation}\label{Ened}
 \cE_\delta[\mu] \lesssim \frac{\delta}{\lambda} \cE_{\lambda}[\mu]  + \lambda |\mu|\,.
\end{equation}
 
Let $T>6$ to be specified later (it depends only on $\tau$). We set $\lambda_i:=T^{i}\delta$ for $i\in\mathbb{Z}_+$.
If $\cE_{\delta}[\mu]\leq \lambda_1 |\mu|$ then 

$$\cE_{\delta}[\mu]=\cE_{\delta}[\mu]^{\tau}\cE_{\delta}[\mu]^{1-\tau}\leq C_{\tau} \cE[\mu]^{\tau} (\delta|\mu|)^{1-\tau}$$ as $\lambda_1=T\delta$. This gives the desired estimate with $C_{\tau}:=T^{1-\tau}$.

If $\cE_{\delta}[\mu]>\lambda_1 |\mu|$ then we use a stopping argument. We fix the first $k\in\mathbb{N}$ such that i) $\cE_{\lambda_{k-1}}[\mu]>\lambda_{k}|\mu|$ but ii) $\cE_{\lambda_{k}}[\mu]\leq \lambda_{k+1}|\mu|$. The ``stopping'' happens as $\cE[\mu]$ is finite but $\lambda_k \underset{k \rightarrow \infty}{\rightarrow} \infty$. 

Now, using \eqref{Ened} for $1\leq i\leq k$ we have 

\begin{equation}
\label{Recur}
\cE_{\lambda_{i-1}}[\mu] \lesssim \frac{\lambda_{i-1}}{\lambda_{i}} \cE_{\lambda_{i}}[\mu] + \lambda_{i} |\mu|\lesssim T^{-1}\cE_{\lambda_{i}} [\mu]
\end{equation}
 For the last inequality we used i) : $\lambda_i|\mu|=T^{-1}\lambda_{i+1}|\mu|\leq T^{-1}\cE_{\lambda_{i}}[\mu]$. Next, we multiply these inequalities to get:

$$
\cE_{\delta}[\mu]\leq \Big(\frac{c_0}{T}\Big)^k\cE_{\lambda_{k}} [\mu]\underset{\text{by ii)}}{\leq} c_0^k T\delta |\mu|\,,
$$ 
where $c_0$ is the constant in \eqref{Recur} which we can assume to be $\geq 6$. We choose $T:=c_0^{\frac{1}{\tau}}$. By i) we see that $k\leq \log_T \big(\frac{A}{\delta}\big)$ where $A= \frac{\cE[\mu]}{|\mu|}$. Thus, $c_0^k \leq \big(\frac{A}{\delta}\big)^{\tau}$, which gives the desired result with $C_{\tau}:=T=c_0^{\frac{1}{\tau}}$. 

In fact,
\begin{equation}
\label{Final}
\cE_{\delta}[\mu]\leq  T \Big(\frac{\cE[\mu]}{\delta |\mu|}\Big)^\tau\delta |\mu|= T\delta^{1-\tau} |\mu|^{1-\tau}\cE[\mu]^\tau\,.
\end{equation}

In the previous case the constant $C_{\tau}$ was $T^{1-\tau}$, but obviously this is less than $T$.  Notice also that $\min\limits_{\tau \in (0,1)} c_0^{\frac{1}{\tau}} \delta^{1-\tau} = \delta e^{c\sqrt{\log\frac1\delta}}$. Thus we have
\begin{corollary}
\label{Vmu}
Let $\mu$ be a measure on bi-tree such that $|\mu|\le \cE[\mu]$. Then
$$
\cE_\delta[\mu] \le  \delta e^{c\sqrt{\log\frac1\delta}} \cE[\mu]\,.
$$
\end{corollary}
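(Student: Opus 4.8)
The plan is simply to optimize the free exponent $\tau$ in Theorem \ref{eps}. Tracing through the derivation of \eqref{Final}, the constant there is explicit: one may take $C_\tau = T = c_0^{1/\tau}$, where $c_0 \ge 6$ is the absolute constant from the recursion \eqref{Recur} (in the complementary easy case one even gets the smaller value $T^{1-\tau} = c_0^{(1-\tau)/\tau} \le c_0^{1/\tau}$). Thus for every $\tau \in (0,1)$ we have $\cE_\delta[\mu] \le c_0^{1/\tau}\,\delta^{1-\tau}\,|\mu|^{1-\tau}\,\cE[\mu]^\tau$.

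First I would use the hypothesis $|\mu| \le \cE[\mu]$: since $1-\tau>0$, this yields $|\mu|^{1-\tau}\cE[\mu]^\tau \le \cE[\mu]$, hence
\[
\cE_\delta[\mu] \le \big(c_0^{1/\tau}\delta^{-\tau}\big)\,\delta\,\cE[\mu], \qquad \tau\in(0,1).
\]
It then remains to minimize the scalar factor $F(\tau):=c_0^{1/\tau}\delta^{-\tau}$. Writing $a:=\log c_0 \ge \log 6 >0$ and $b:=\log\tfrac1\delta$ (we may of course assume $\delta\le 1$), we have $\log F(\tau)=\tfrac a\tau + b\tau$, which is convex in $\tau>0$ and minimized at $\tau_\ast=\sqrt{a/b}$ with value $2\sqrt{ab}$. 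So $F(\tau_\ast)=e^{c\sqrt{\log(1/\delta)}}$ with $c:=2\sqrt{\log c_0}$, and plugging $\tau=\tau_\ast$ into the displayed bound gives exactly $\cE_\delta[\mu]\le \delta e^{c\sqrt{\log(1/\delta)}}\cE[\mu]$.

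The one point requiring a word of care is that $\tau_\ast$ must lie in $(0,1)$, which happens exactly when $\log\tfrac1\delta>\log c_0$, i.e.\ $\delta<1/c_0$; the argument above then applies verbatim. For $\delta\in[1/c_0,1]$ one uses instead the trivial bound: since $\bV^\mu_\delta$ sums over a sub-collection of the terms defining $\bV^\mu$ and $\mu\ge0$, we have $\bV^\mu_\delta\le\bV^\mu$ pointwise and therefore $\cE_\delta[\mu]\le\cE[\mu]$; a direct check (substituting $u=\log\tfrac1\delta\in[0,\log c_0]$ and $v=\sqrt u$ shows the exponent $-v^2+2\sqrt{\log c_0}\,v\ge0$) gives $\delta e^{c\sqrt{\log(1/\delta)}}\ge1$ with the same $c$, and combining the two inequalities closes this case. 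I do not anticipate any genuine obstacle here: all the work is already done in Theorem \ref{eps}, and the corollary is just the elementary one-parameter optimization that theorem unlocks.
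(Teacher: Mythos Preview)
Your proof is correct and follows exactly the route the paper indicates just before the corollary: optimize the explicit constant $C_\tau=c_0^{1/\tau}$ from \eqref{Final} via the AM--GM type computation $\min_\tau\bigl(\tfrac{\log c_0}{\tau}+\tau\log\tfrac1\delta\bigr)=2\sqrt{\log c_0\cdot\log\tfrac1\delta}$, after using $|\mu|\le\cE[\mu]$ to absorb the $|\mu|^{1-\tau}\cE[\mu]^\tau$ factor. Your added care with the range $\delta\ge 1/c_0$ (where $\tau_\ast\ge1$) via the trivial bound $\cE_\delta[\mu]\le\cE[\mu]$ is a nice touch the paper leaves implicit.
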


\begin{remark}
For $\mu$ be such that $|\mu|\le \cE[\mu]$ on a simple tree we have a trivial estimate
$$
\cE_\delta[\mu]:= \int V^\mu_\delta d\mu \le   \delta\cE[\mu]\,.
$$
It just follows from another obvious one parameter claim \eqref{1param}.
\end{remark}

\section{Majorization in Lemma \ref{lambda}}
\label{maj}
Now we prove Lemma \ref{lambda}.

\begin{proof}
Consider $m:=\one_{E_\delta} \bI^* \mu$, and $n:= \bI^* \mu$. 

Also fix $(\beta_0, \al_0) \in T^2$ and let this node be such that
\begin{equation}
\label{20lambda}
\bV^\mu_\delta(\beta_0,\al_0) \ge 20 \lambda\,.
\end{equation}

Now put
$$
\vf = \frac1\lambda I_1 m \cdot I_2 n\cdot \one_{E_{3\lambda}\setminus E_\delta} + \frac1\lambda I_2 m \cdot I_1 n\cdot \one_{E_{3\lambda}\setminus E_\delta} \,.
$$
Then 
$$
\bI\vf (\beta_0,\al_0) = \frac1\lambda\big(I_2I_1(  I_1 m \cdot I_2 n\cdot \one_{E_{3\lambda}\setminus E_\delta} ) + I_1I_2( I_2 m \cdot I_1 n\cdot \one_{E_{3\lambda}\setminus E_\delta} )  \big) (\beta_0,\al_0)\,.
$$
And 
$$
I_2I_1(  I_1 m \cdot I_2 n\cdot \one_{E_{3\lambda}\setminus E_\delta} ) (\beta_0,\al_0)=\sum_{\al\ge \al_0}I_1(  I_1 m \cdot I_2 n\cdot \one_{E_{3\lambda}\setminus E_\delta} ) (\beta_0,\al)=
$$
$$
\sum_{\al\ge \al_0}I_1 m (\beta_0, \al)\cdot I_1 (N \one_{\delta\le I_1 N \le 3\lambda})(\beta_0, \al),
$$
where $N:= I_2 \, n$. But
\begin{equation}
\label{N}
 I_1 (N \one_{\delta<  I_1 N \le 3\lambda})(\beta_0, \al) \ge \frac{3}{2}\lambda-2\delta\ge \lambda,
 \end{equation}
 if   $\bI \, n(\beta_0, \al)= (I_1N) (\beta(\al), \al) > 3\lambda$. This  set is non-empty by \eqref{20lambda}. We denote the set of such $\al\ge \al_0$ by
 $L(\beta_0, \al_0)$. So $(\beta_0, \al)\notin E_{3\lambda}$. Let $\al\ge \al_0$ be in $L(\beta_0, \al_0)$, and $\al\ge \al'\ge \al_0$. Then $\al' \in L(\beta_0, \al_0)$ as well.
 In fact,  $(I_1N ) (\beta_0, \al')  \ge (I_1N) (\beta_0, \al)  > 3\lambda$, because $I_1 N=\bI \,n$ is monotone increasing in each variable. So the set $L(\beta_0, \al_0)$ is the {\it ray}, or {\it segment} $[\al_0, \ell(\beta_0, \al_0)]$.
 
 To check \eqref{N} let us denote by $\beta(3\lambda)$ the place such that it is the first $>\beta_0$ such that $(\beta(3\lambda), \al) \in E_{3\lambda}$.  If it exists. If it does not exists we put $\beta(3\lambda):= \beta_0$. We also denote by $\beta(\delta)$ the last $>\beta_0$ such that $(\beta(\delta), \al) \notin E_{\delta}$.  
So it may happen that  $\beta(\delta):= O=[0,1]$, the maximal dyadic interval.
 
 Then we get \eqref{N} as follows 
 $$
 I_1 (N \one_{\delta<  I_1 N \le 3\lambda})(\beta_0, \al) =\sum_{\beta(3\lambda) \le \beta'\le \beta(\delta)} N(\beta', \al) = 
 $$
 $$
 \sum_{\beta(3\lambda) \le \beta'} N(\beta', \al) - \sum_{\beta(\delta) < \beta'} N(\beta', \al) \ge \frac32\lambda   -2\delta\,.
 $$

 \bigskip
 
 
 Using \eqref{N} we continue:
 $$
 I_2I_1(  I_1 m \cdot I_2 n\cdot \one_{E_{3\lambda}\setminus E_\delta} ) (\beta_0,\al_0) \ge \lambda \sum_{\al\ge \al_0, \al\in L}I_1 m (\beta_0, \al)\,.
 $$
 
 Symmetrically, if $R(\beta_0, \al_0)$ is the set of $\beta$ such that there  exist $\al(\beta)\ge \al_0$ such that $\bI \, n(\beta, \al(\beta))= I_2 H (\beta, \al(\beta)) \ge 3\lambda$, where $H:= I_1\, n$, we have
 
 $$
 I_1I_2(  I_2 m \cdot I_1 n\cdot \one_{E_{3\lambda}\setminus E_\delta} ) (\beta_0,\al_0) \ge \lambda \sum_{\beta\ge \beta_0, \beta\in R}I_2 m (\beta, \al_0).
 $$
 
 We also conclude as before that  the set $R(\beta_0, \al_0)$ is the {\it ray}, or {\it segment} $[\beta_0, r(\beta_0, \al_0)]$.
 
 \bigskip
 
 But we assumed in \eqref{20lambda} that
 \begin{equation}
 \label{V20}
 I_1I_2 m (\beta_0, \al_0)= I_2I_1 m (\beta_0, \al_0) = \bV^\mu_\delta (\beta_0, \al_0) \ge 20\lambda\,.
 \end{equation}
 
 This assumption allows us to see that
 \begin{equation}
 \label{maj1}
 \sum_{\al\ge \al_0, \al\in L(\beta_0, \al_0)}I_1 m (\beta_0, \al) + \sum_{\beta\ge \beta_0, \beta\in R(\beta_0, \al_0)}I_2 m (\beta, \al_0) \ge  \bV^\mu_\delta (\beta_0, \al_0)  -\delta\ge \frac{19}{20} \bV^\mu_\delta (\beta_0, \al_0)\,.
 \end{equation}
 
 To see \eqref{maj1} let us rewrite it as follows
 
 \begin{equation}
 \label{maj2}
 \sum_{\al_0\le \al\le \ell(\beta_0, \al_0)}I_1 m (\beta_0, \al) + \sum_{\beta_0\le \beta\le r(\beta_0, \al_0)}I_2 m (\beta, \al_0) \ge  \bV^\mu_\delta (\beta_0, \al_0)  -\delta\ge \frac{19}{20} \bV^\mu_\delta (\beta_0, \al_0)\,.
 \end{equation}
 
 To prove \eqref{maj2} let us assume first that
 \begin{equation}
 \label{above}
 (\ell(\beta_0, \al_0), r(\beta_0, \al_0)) \in E_\delta\,.
 \end{equation}
 Recall that $\bV^\mu_\delta = I_2I_1 m$. Thus the first sum in \eqref{maj2}  gives us the summation of $\bI^*\mu$ over the part
  of  $E_\delta$ that is $E_\delta\cap \{(\beta, \gamma)\ge (\beta_0, \al_0): \al \le \ell(\beta_0, \al_0)\}$.
  
  But we symmetrically have $\bV^\mu_\delta = I_1I_2 m$. Thus the second sum in \eqref{maj2}  gives us the summation of $\bI^*\mu$ over the part
  of  $E_\delta$ that is $E_\delta\cap \{(\beta, \gamma)\ge (\beta_0, \al_0): \beta \le r(\beta_0, \al_0)\}$. 
  
For the sake of brevity let us denote $\ell:= \ell(\beta_0, \al_0), r= r(\beta_0, \al_0)$.   The only part of the summation of $\bI^*\mu$ involved in the definition of $\bV^\mu_\delta (\beta_0, \al_0)$, which is left uncovered by both sums of \eqref{maj2} is, therefore, $\sum_{\beta\ge \ell, \, \al\ge r} \bI^*\mu (\beta, \al)$. But by assumption \eqref{above} and the definition of $E_\delta$ this sum is at most $\delta$. Thus \eqref{maj2} is proved when \eqref{above}  holds. 

\bigskip



Now assume that \eqref{above} does not hold. In this case we are going to estimate $ \bV^\mu_\delta (\beta_0, \al_0)$ from above and to come to contradiction with \eqref{V20}. In fact, $ \bV^\mu_\delta (\beta_0, \al_0)$ is bounded by three sums:
\begin{itemize}
\item $\sum_{\beta_0\le\beta<r,\, \al_0\le \al<\ell} \one_{E_\delta}(\beta, \al)\bI^*\mu(\beta, \al)$, 
\item$\sum_{\beta_0\le\beta, \al\ge \ell} \one_{E_\delta}(\beta, \al)\bI^*\mu(\beta, \al)$, 
\item$\sum_{\al_0\le \al, \beta\ge r} \one_{E_\delta}(\beta, \al)\bI^*\mu(\beta, \al)$.
\end{itemize}
The first sum vanishes as by the fact that we have negation of \eqref{above} each term of this sum is zero. In fact, let 
$$
(\beta, \al) \in [\beta_0, r]\times [\al_0, \ell],
$$
then $(\beta, \al) \notin E_\delta$ because by the negation of \eqref{above} $(r, \ell)\notin E_\delta$. Thusl $\one_{E_\delta}(\beta, \al)=0$  in the first sum. 

The second sum $\le 6\lambda$. This is because $\ell$ is the maximal element for which such a sum is $\ge  3\lambda$. This maximality, and the monotone increasing  of $\al\to I_1 m (\beta_0, \al)$, imply that the second sum is at most $6\lambda$. The monotone increasing of $\al\to I_1 m (\beta_0, \al)$ follows from the monotone increasing of function $m=\one_{E_delta} \bI^* \mu$ in both variables.

By the same reasoning (symmetrically) the third sum is at most $6\lambda$.

We conclude that if \eqref{above} does not hold then  $ \bV^\mu_\delta (\beta_0, \al_0) \le 12\lambda$, but this contradicts \eqref{V20}. We finally proved \eqref{maj2}, and, therefore, \eqref{maj1}, under the assumption \eqref{20lambda} that  $ \bV^\mu_\delta (\beta_0, \al_0)\ge 20\lambda$. 
 
 \bigskip

 From \eqref{maj1} it follows now that
 $$
 \bI \vf (\beta_0, \al_0) \ge  \frac{19}{20} \bV^\mu_\delta (\beta_0, \al_0),
 $$
 if \eqref{V20} is satisfied. Obviously this gives the majorization claim in Lemma. The support condition follows by the definition of $\vf$.
 
 \section{The energy estimation}
 \label{energy}
 
 We are left to prove the norm (energy) claim of Lemma \ref{lambda}:
 \begin{equation}
 \label{en}
 \int_{T^2} \vf^2 \le K \frac{\delta}{\lambda} \cE_{\delta}[\mu]\,.
 \end{equation}
 By the definition of $\vf$, 
 $$
  \int_{T^2} \vf^2 = \frac2{\lambda^2}\int (I_1m )^2 \cdot (I_2\, n)^2 \one_{E_{3\lambda}} + \frac2{\lambda^2}\int (I_2m )^2 \cdot (I_1\, n)^2 \one_{E_{3\lambda}}\,.
  $$
  These two terms are symmetric, we will estimate the first one. To do that we need two lemmas.
  \begin{lemma}
  \label{2.1}
  Let $T$ be a finite dyadic tree, and $g, h$ be non-negative functions on $T$. Let $g$ be superadditive, and  let $Ih \le \lambda$ on $\supp g$. Then  for any $\beta\in T$
  $$
  I^*(gh)(\beta) = \sum_{\al\le \beta} g(\al) h(\al) \le \la g(\beta)\,.
  $$
  \end{lemma}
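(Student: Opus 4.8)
The plan is to exploit the superadditivity of $g$ by writing $g$ as the ``summing‑down'' of its nonnegative increments, after which the hypothesis $Ih\le\lambda$ applies pointwise on $\supp g$ and the conclusion follows from a single interchange of summation. Concretely, set $\tilde g(\gamma):=g(\gamma)-\sum_{c}g(c)$, the sum running over the children $c$ of $\gamma$ in $T$. Superadditivity of $g$ is exactly the statement $\tilde g\ge0$, and since $T$ is finite an induction from the leaves gives the telescoping identity $g(\alpha)=\sum_{\gamma\le\alpha}\tilde g(\gamma)$ for every $\alpha\in T$ (the subtrees hanging below the children of a node partition the rest of the subtree below that node, so there is no double counting). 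In particular $\tilde g(\gamma)>0$ forces $g(\gamma)\ge\tilde g(\gamma)>0$, so $\supp\tilde g\subseteq\supp g$ and $Ih\le\lambda$ holds on $\supp\tilde g$ as well.

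Next I substitute this decomposition into $I^{*}(gh)(\beta)$ and use Fubini on the tree:
\[
I^{*}(gh)(\beta)=\sum_{\alpha\le\beta}h(\alpha)\sum_{\gamma\le\alpha}\tilde g(\gamma)=\sum_{\gamma\le\beta}\tilde g(\gamma)\sum_{\gamma\le\alpha\le\beta}h(\alpha).
\]
For each $\gamma$ with $\tilde g(\gamma)>0$ we have $\gamma\in\supp g$, hence the inner sum is at most $\sum_{\alpha\ge\gamma}h(\alpha)=Ih(\gamma)\le\lambda$; the only point here is that $h\ge0$ lets us enlarge the index set from the segment $[\gamma,\beta]$ to the full up‑set of $\gamma$. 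Since $\tilde g\ge0$, summing in $\gamma$ yields $I^{*}(gh)(\beta)\le\lambda\sum_{\gamma\le\beta}\tilde g(\gamma)=\lambda\,g(\beta)$, which is the claim. (If instead one keeps the segment $[\gamma,\beta]$, writing $\sum_{\gamma\le\alpha\le\beta}h(\alpha)=Ih(\gamma)-\sum_{\alpha'>\beta}h(\alpha')$, one even gets the sharper bound $I^{*}(gh)(\beta)\le\bigl(\lambda-\sum_{\alpha'>\beta}h(\alpha')\bigr)g(\beta)$, which may be convenient later.)

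The thing to be careful about is that the most naive approach — peeling off $\beta$ and applying the statement to the subtrees rooted at the children of $\beta$ — does not close, because it leaves an uncompensated term $g(\beta)h(\beta)$ at the top of each subtree. Expressing $g$ through its increments $\tilde g$ (equivalently, carrying the ``already spent'' mass $\sum_{\alpha'>\beta}h(\alpha')$ as a budget down an inductive argument) is precisely what absorbs that term. Beyond this observation, the proof is just the telescoping identity for $g$ together with one interchange of summation and the hypothesis applied on $\supp g$, so I do not expect any genuine difficulty.
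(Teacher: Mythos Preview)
Your proof is correct and takes a genuinely different route from the paper's. The paper argues by induction on $\beta$ with a strengthened claim, namely $I^*(gh)(\beta)\le g(\beta)\max_{\alpha\le\beta}Ih(\alpha)$: one peels off the top term $g(\beta)h(\beta)$, applies the inductive hypothesis to the two children, uses $g(\beta_+)+g(\beta_-)\le g(\beta)$, and then absorbs the extra $h(\beta)$ when passing from the children's maxima to the max over $\alpha\le\beta$. (For that last step to be an equality as written, $Ih$ in the induction must be read as the Hardy operator on the subtree rooted at the current $\beta$, so that adjoining the new root adds exactly $h(\beta)$ to every $Ih(\alpha)$; the paper is slightly informal on this point.) Your argument instead writes $g=I^*\tilde g$ for the nonnegative increments $\tilde g$ and uses a single Fubini, so the hypothesis $Ih\le\lambda$ on $\supp g\supset\supp\tilde g$ applies directly with no strengthening of the inductive statement. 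This is cleaner and sidesteps the bookkeeping of which tree $I$ lives on; the paper's inductive version, on the other hand, makes the sharper intermediate bound explicit, which matches your parenthetical $(\lambda-\sum_{\alpha'>\beta}h(\alpha'))g(\beta)$.
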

  \begin{proof}
  Let us prove that
   $$
  I^*(gh)(\beta) = \sum_{\al\le \beta} g(\al) h(\al) \le g(\beta) \max_{\al\in T, \al\le \beta} Ih(\al) \,.
  $$

  The support of $g$ is an up-set by superadditivity. Then this holds trivially if $g(\beta)=0$, and so we  need to check the claim only on the support of $g$.
  Let $\beta\in \supp g$ and let $\beta_+, \beta_-$ be two children of $\beta$. 
  Then by induction
  \begin{align*}
  & I^*(gh)(\beta) = g(\beta) h(\beta) + I^*(gh)(\beta_+) + I^*(gh)(\beta_-)  \le
  \\
  &  g(\beta) h(\beta) +  g(\beta_+) \max_{\al\in T, \al\le \beta_+} Ih(\al) +g(\beta_-) \max_{\al\in T, \al\le \beta_-} Ih(\al) \le
  \\
  & g(\beta) h(\beta) + (g(\beta_+) + g(\beta_-))\cdot \max [ \max_{\al\in T, \al\le \beta_+} Ih(\al),  \max_{\al\in T, \al\le \beta_-} Ih(\al)] \le
  \\
  &g(\beta) h(\beta) + g(beta) \cdot \max [ \max_{\al\in T, \al\le \beta_+} Ih(\al),  \max_{\al\in T, \al\le \beta_-} Ih(\al)] =
  \\
  & g(\beta)\cdot [ h(\beta) +  \max_{\al\in T, \al< \beta} Ih(\al)] =  g(\beta)\cdot [  \max_{\al\in T, \al\le \beta} Ih(\al)] 
  \end{align*}
  Lemma \ref{2.1} is proved.
  \end{proof}
 
 In the next lemma the operator $I$  is an operator  on any abstract space with any positive measure. 
 \begin{lemma}
 \label{2.2}
 Let $I$ be an operator with positive kernel, and $f, g$ non-negative functions. Then
 $$
 \int (If)^2 g \le \sup_{\supp \,g} II^*(g) \int f^2
 $$
 \end{lemma}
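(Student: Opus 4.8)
The plan is to prove Lemma~\ref{2.2} by a straightforward adjointness computation combined with the Cauchy--Schwarz inequality, exploiting the positivity of the kernel of $I$ so that every quantity that appears is non-negative and no cancellation can hurt us. Throughout, let $K(x,y)\ge 0$ denote the kernel of $I$, so that $If(x)=\int K(x,y)f(y)\,d\nu(y)$ and $I^*g(y)=\int K(x,y)g(x)\,d\nu(x)$ for the ambient positive measure $\nu$.

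\textbf{Step 1: expand $(If)^2$ and integrate against $g$.} I would write
$$
\int (If)^2 g\,d\nu = \int \Big(\int K(x,y) f(y)\,d\nu(y)\Big)\Big(\int K(x,y') f(y')\,d\nu(y')\Big) g(x)\,d\nu(x),
$$
and then use Tonelli's theorem (everything is non-negative) to bring the $x$-integral inside:
$$
\int (If)^2 g\,d\nu = \iint f(y) f(y') \Big(\int K(x,y) K(x,y') g(x)\,d\nu(x)\Big) d\nu(y)\,d\nu(y').
$$
The inner integral is exactly the kernel of $I^*M_g I$ (or, after one more integration, relates to $II^*g$); the key point is that it is pointwise bounded.

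\textbf{Step 2: bound the inner kernel.} For fixed $y,y'$, by the positivity of $K$ and of $g$,
$$
\int K(x,y)K(x,y') g(x)\,d\nu(x) \le \Big(\sup_{x\in\supp g} \ \int K(x,y)\,d\nu(\cdot)\ \text{...}\Big),
$$
more precisely I would estimate $\int K(x,y)K(x,y')g(x)\,d\nu(x)$ by Cauchy--Schwarz in a way that produces $\sup_{\supp g} II^*g$. Concretely: bound $K(x,y')\le \sup_{x\in\supp g}\big(I^*g(y')/\text{something}\big)$ is not quite the clean route; instead apply Cauchy--Schwarz directly to the double integral in Step 1, writing
$$
\int (If)^2 g = \int (I f)\,(If)\,g \le \Big(\int (If)^2\,?\Big)^{1/2}\cdots,
$$
Rather, the cleanest path: for each fixed $x$, $(If(x))^2 = \big(\int K(x,y)f(y)\,d\nu\big)^2$, and I want to pull out a factor. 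Use that $\int(If)^2 g = \int f\cdot I^*(g\cdot If)\,d\nu \le \big(\int f^2\big)^{1/2}\big(\int (I^*(g\,If))^2\big)^{1/2}$ is circular, so instead I will bound $\int(If)^2 g$ by duality as $\sup\{\langle If,\psi\rangle : \int \psi^2/g \le 1, \ \supp\psi\subset\supp g\}$ squared, giving $\int(If)^2 g = \| M_{\sqrt g} I\|^2 \int f^2$ with the operator norm of $M_{\sqrt g}I$ on $L^2(\nu)$, and $\|M_{\sqrt g}I\|^2 = \|M_{\sqrt g} I I^* M_{\sqrt g}\| \le \sup_{\supp g} g \cdot (II^*g)/g$... this still needs care.

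\textbf{Step 3: the decisive estimate.} The honest clean argument is: set $w(x):= II^*g(x)$ and observe $\int (If)^2 g\,d\nu = \int (I^*(g\cdot If))\, f\,d\nu$. Since $\supp(g\cdot If)\subset \supp g$, apply Cauchy--Schwarz with weight $w$ to the pairing against the kernel $K$ in $I^*$: for any $x$,
$$
I^*(g\,If)(y) = \int K(x,y)\,g(x)\,If(x)\,d\nu(x) \le \Big(\int K(x,y) g(x)\,d\nu\Big)^{1/2}\Big(\int K(x,y) g(x) (If(x))^2\,d\nu\Big)^{1/2},
$$
then integrate $f(y)$ against this and apply Cauchy--Schwarz once more in $y$; the first factor assembles into $\sup_{\supp g} II^*g$ after using $\int K(x,y)g(x) d\nu = I^*g(x)$-type bookkeeping, and the second factor reproduces $\int (If)^2 g$, so one gets an inequality $X \le (\int f^2)^{1/2}(\sup_{\supp g} II^*g)^{1/2} X^{1/2}$ with $X=\int(If)^2 g$, which closes to give $X\le \sup_{\supp g}(II^*g)\int f^2$.

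\textbf{Main obstacle.} The only delicate point is organizing the two applications of Cauchy--Schwarz so that the $\sup_{\supp g} II^*g$ factor emerges with the right (first) power rather than a square, and so that the leftover factor is exactly $\int(If)^2 g$ again, allowing the self-improving ``$X\le c^{1/2}X^{1/2}$ hence $X\le c$'' trick; getting the support restriction ($\supp g$, not all of the space) into the supremum is precisely what forces us to route the estimate through $I^*(g\cdot If)$, whose support lies in $\supp g$, rather than bounding $(If)^2$ pointwise. Everything else — Tonelli, positivity of the kernel, non-negativity of $f,g$ — is routine.
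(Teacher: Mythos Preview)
Your Step~3 is correct and is essentially the paper's own argument: write $X:=\int (If)^2 g = \int f\cdot I^*(g\,If)$, then obtain the self-improving bound $X \le \|f\|_2\,(\sup_{\supp g} II^*g)^{1/2}\,X^{1/2}$ and square up. The only cosmetic difference is that the paper first applies Cauchy--Schwarz in $y$ to get $X\le \|f\|_2\,\|I^*(g\,If)\|_2$ and then expands $\|I^*(g\,If)\|_2^2$ with the kernel and the AM--GM inequality $If(x)If(x')\le \tfrac12(If(x)^2+If(x')^2)$, whereas you apply Cauchy--Schwarz pointwise in $x$ (with the measure $K(\cdot,y)g(\cdot)\,d\nu$) before the $y$-integration; both routes land on the identical intermediate bound $\int (I^*(g\,If))^2 \le \int II^*g\cdot g\,(If)^2$, so there is no substantive difference. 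Your Steps~1--2 are exploratory dead ends you correctly abandon; the write-up would be cleaner if you deleted them and carried out Step~3 linearly, fixing the small slips (e.g.\ $\int K(x,y)g(x)\,d\nu(x)=I^*g(y)$, not ``$I^*g(x)$'', and the $\sup_{\supp g} II^*g$ emerges from the \emph{second} factor after the adjointness $\int I^*g\cdot I^*(g(If)^2)=\int II^*g\cdot g(If)^2$, not from the first).
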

 \begin{proof}
 Let $Ih(x)=\int K(x, y) h(y) dy$, here $dy$ is just any measure. Then $I^* f(y) =\int K(x, y) f(x) dx$. Then
 \begin{align}
 \label{conj}
 & \int (If)^2 g = \int f I^*\big( g If\big) \le \|f\|_2 \Big( \int  I^*\big( g If\big)  I^*\big( g If\big) dy\Big)^{1/2}.
 \end{align}
 On the other hand, we can write (we use symmetry between $x$ and $x'$ in the third line)
 \begin{align*}
 &\int  I^*\big( g If\big)  I^*\big( g If\big) dy = \int dy\int dx'\int dx K(x,y) g(x) If(x) K(x', y)   g(x') If(x') \le
 \\
 &\frac12  \int dy\int dx'\int dx K(x,y) g(x) \big[ If(x)^2 + If(x')^2\big] K(x', y)   g(x')  =
 \\
 & \int dy\int dx'\int dx K(x,y) g(x)  If(x)^2  K(x', y)   g(x') =\int I^* g(y)\cdot  I^*(g (If)^2)(y) dy=
 \\
 &
 \int II^* g \cdot g \cdot (If)^2 \le \sup_{\supp\, g} II^* g \cdot \int (If)^2 g\,.
 \end{align*}
 Now we plug this estimate into \eqref{conj} and we get
 $$
  \int (If)^2 g  \le \Big(\int f^2\Big)^{1/2}\cdot \Big( \sup_{\supp\, g} II^* g \Big)^{1/2}\cdot \Big(\int (If)^2 g\Big)^{1/2}\,.
  $$
  This is exactly the claim of Lemma \ref{2.2}.
 \end{proof}
 \begin{remark}
 There is always a stronger version:
 $$
 \int (If)^2 g \le \sup_{\supp \,g} I(\one_{\supp\, f}I^*(g)) \int f^2.
 $$
 In fact, we just apply Lemma \ref{2.2} to a new operator $\tilde I\phi := I(\one_{\supp\, f}\phi)$.
 \end{remark}
  Then let $f:=m, I:= I_1, g:= (I_2\, n)^2 \one_{E_{3\lambda}}$, then from this lemma and this remark we conclude the following:
 \begin{equation}
 \label{first}
 \int (I_1m )^2 \cdot (I_2\, n)^2 \one_{E_{3\lambda}} \le  \sup I_1(\one_{\supp m} I_1^*[ \rho] )  \int m^2, 
 \end{equation}
 where $\rho:= (I_2\, n)^2 \one_{E_{3\lambda}}$.  
 
 Now we apply Lemma \ref{2.1} with $g:= (I_2 n)\cdot  \one_{E_{3\lambda}}, h:= (I_2\, n)$, $I:=I_1$.
 Notice
 \begin{enumerate}
 \item $I_1 I_2 n=  \bI n = \bI\bI^*\mu= \bV^\mu$.
 \item So $E_{3\lambda} =\{ \bV^\mu \le 3\lambda\} = \{ I_1 h \le 3\lambda\}$, as $ h:= (I_2\, n)$.
 \item Hence, $\supp g \subset E_{3\lambda} =  \{ I_1 h \le 3\lambda\}$.
 \end{enumerate}
 Therefore, we are in the assumptions of Lemma \ref{2.1} with $g= I_2 n\cdot  \one_{E_{3\lambda}}$, $h=I_2 n\cdot  \one_{E_{3\lambda}}$, $\supp g\subset E_{3\la} =\{ I_1(I_2 n)\le 3\la\}$. We conclude that the following pointwise estimate holds: 
 $$
I_1^* \rho =  I_1^*\big( (I_2 n)^2\cdot  \one_{E_{3\lambda}}\big) \le 3\lambda\cdot I_2n\,.
 $$
 
 \medskip
 
 \begin{remark}
 \label{I*lemma}
Lemma \ref{2.1} was vital here. This is the only place we used that $I_1$ is a simple tree operator.
\end{remark}

\medskip

 But $\supp m \subset E_\delta$. So we get an  estimate
 \begin{equation}
 \label{second}
I_1(\one_{\supp m} I_1^* \rho) \le  I_1 (\one_{E_\delta} (I_1^*\rho)) \le 3\lambda \cdot  I_1(\one_{E_\delta} I_2n) \le   3\delta\lambda,
 \end{equation}
 where the last estimate follows by the following simple observation. We denoted $h=I_2 n$ and we know that (see (1), (2) above with $\delta$ replacing $3\lambda$):
 $$
 E_\delta = \{ I_1 h \le \delta\}\,.
 $$
 The latter set is of course an up-set on a simple tree. Now on a simple tree (but not on a multiple tree)
 \begin{equation}
 \label{simpleTREE}
I_1(\one_{E_\delta}\cdot I_2n) = I_1 (\one_{I_1 h\le \delta}\cdot h) \le \delta\,.
 \end{equation}
 
 Hence, plugging \eqref{second} into \eqref{first}, we conclude that
 $$
 \int_{T^2} \vf^2 \le A_0 \frac{\delta}{3\lambda} \int m^2,
 $$
  but 
 $$
 \int m^2 =\int \one_{E_\delta} [\bI^*\mu]^2= \int \bI [\one_{E_\delta} \bI^*\mu]  d\mu =\int\bV^\mu_\delta d\mu =\cE_\delta[\mu]\,,
 $$
 and  Lemma \ref{lambda} is proved.

\end{proof}

\section{A shorter proof of Theorem \ref{eps} given Lemma \ref{lambda}}
\label{shorter}

\begin{theorem}
\label{thm:surr-mar}
Let $\mu$ be a measure on $T^{2}$.
If $\cE[\mu] \geq 2\delta |\mu|$, then
\[
\cE_\delta[\mu]
\lesssim
\delta \exp\Bigl(c\sqrt{\log \frac{\cE[\mu]}{\delta |\mu|}} \Bigr) |\mu|.
\]
\end{theorem}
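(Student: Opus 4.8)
The plan is to reuse the single‑scale recursion that Lemma~\ref{lambda} already gives us—inequality~\eqref{Ened}, which I restate as: there is an absolute constant $c_0\ge 6$ with
\[
\cE_\delta[\mu]\le c_0\Bigl(\frac{\delta}{\lambda}\,\cE_\lambda[\mu]+\lambda|\mu|\Bigr),\qquad 0<\delta\le\lambda/6,
\]
and to run the stopping‑time iteration of Section~\ref{surr} only once, but keeping the scaling ratio $T$ as a free parameter all the way to the end, where it gets optimized. This refines Corollary~\ref{Vmu} by making the dependence on the ratio $\cE[\mu]/(\delta|\mu|)$ explicit. Write $A:=\cE[\mu]/|\mu|$, so the hypothesis reads $A/\delta\ge 2$; recall also that $s\mapsto\cE_s[\mu]$ is non‑decreasing and bounded by $\cE[\mu]<\infty$, both because $\bV^\mu_s\le\bV^\mu$ pointwise.

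First I would fix $T\ge 6$, set $\lambda_i:=T^i\delta$, and let $k\ge 0$ be the first index with $\cE_{\lambda_k}[\mu]\le\lambda_{k+1}|\mu|$; such $k$ exists since the left sides stay bounded by $\cE[\mu]$ while $\lambda_{k+1}|\mu|\to\infty$. If $k=0$ then $\cE_\delta[\mu]\le T\delta|\mu|$ and nothing more is needed. If $k\ge1$, then $\cE_{\lambda_j}[\mu]>\lambda_{j+1}|\mu|$ for $0\le j<k$; feeding this into the recursion at consecutive scales (legitimate since $\lambda_{i-1}=\lambda_i/T\le\lambda_i/6$) yields $\cE_{\lambda_{i-1}}[\mu]\le(2c_0/T)\,\cE_{\lambda_i}[\mu]$ for $1\le i\le k-1$, while at the last step the additive term must be handled directly, giving $\cE_{\lambda_{k-1}}[\mu]\le 2c_0\,\lambda_k|\mu|$. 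Multiplying these $k$ inequalities telescopes to $\cE_\delta[\mu]\le(2c_0)^k\,T\,\delta|\mu|$, valid for every $k\ge0$.

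It remains to count the steps and optimize. Since stopping fails at $k-1$ we have $\lambda_k|\mu|<\cE_{\lambda_{k-1}}[\mu]\le\cE[\mu]$, hence $T^k<A/\delta$ and so $(2c_0)^k<(A/\delta)^{\log(2c_0)/\log T}$. Writing $u=\log T\ge\log 6$ and $M=\log(2c_0)\log(A/\delta)>0$, this becomes $\cE_\delta[\mu]\le\exp(u+M/u)\,\delta|\mu|$; the choice $u=\sqrt M$ (admissible when $\sqrt M\ge\log 6$) minimizes the exponent and produces the factor $\exp(2\sqrt M)=\exp\!\bigl(2\sqrt{\log(2c_0)}\,\sqrt{\log(A/\delta)}\bigr)$, which is exactly the claimed bound with $c=2\sqrt{\log(2c_0)}$. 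In the remaining range $\sqrt M<\log 6$—equivalently $A/\delta$ bounded by an absolute constant—I would just take $T=6$ and observe that then $\cE_\delta[\mu]$ is an absolute multiple of $\delta|\mu|$, which is absorbed into $\lesssim$ using $A/\delta\ge2$.

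The real content is not in this bookkeeping but in the recursion~\eqref{Ened} itself, that is, in Lemma~\ref{lambda} together with the quadratic‑inequality step of Section~\ref{surr}; that is where the bi‑parameter difficulty lives, and the present derivation assumes it. The one place in the bookkeeping that needs care is the final ($i=k$) factor of the telescoping product: there the stopping inequality points the wrong way, so the term $\lambda_k|\mu|$ cannot be reabsorbed against a larger scale and must be estimated via $\cE_{\lambda_k}[\mu]\le\lambda_{k+1}|\mu|=T\lambda_k|\mu|$—this is precisely why the prefactor comes out as $(2c_0)^kT$ and why the gain survives as $T\to\infty$. The only other thing to keep in mind is the constraint $T\ge6$ imposed by \eqref{Ened}, which is what turns the small‑ratio regime into a genuine (if trivial) side case.
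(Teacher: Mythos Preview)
Your argument is correct. Both you and the paper start from the same single–scale recursion~\eqref{eq:cE-induction-step} (equivalently~\eqref{Ened}) coming out of Lemma~\ref{lambda}; the difference is purely in how the recursion is iterated. The paper's proof of Theorem~\ref{thm:surr-mar} runs a \emph{descending, super-geometric} induction: it fixes the sequence $\delta_k=A(4A_0)^{-k(k+1)/2}$, so that the scale ratio $\delta_k/\delta_{k+1}=(4A_0)^{k+1}$ grows with $k$, and proves by a single induction that $\cE_{\delta_k}[\mu]\le(4A_0)^{k+1}\delta_k|\mu|$; the $\sqrt{\log}$ appears because $k\asymp\sqrt{\log(A/\delta)}$ when $\delta_{k+1}\le\delta\le\delta_k$. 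You instead run the \emph{ascending, fixed-ratio} stopping-time argument of Section~\ref{surr} with $\lambda_i=T^i\delta$, obtain $\cE_\delta[\mu]\le(2c_0)^kT\,\delta|\mu|$ together with $T^k<A/\delta$, and only then optimize over $T$; this is exactly the route that in the paper leads to Theorem~\ref{eps} and Corollary~\ref{Vmu}, upgraded by keeping the ratio $A/\delta$ explicit rather than invoking $|\mu|\le\cE[\mu]$. The paper's version is slightly slicker (no optimization step, no side case for small $A/\delta$), while yours makes transparent that the theorem is literally the Section~\ref{surr} computation with the minimization over $\tau$ (equivalently $T$) carried out. Your handling of the last telescoping factor $i=k$---where the stopping inequality reverses and $\cE_{\lambda_k}[\mu]\le T\lambda_k|\mu|$ must be used directly---is the one genuinely delicate point, and you treat it correctly.
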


\begin{remark} Corollary \ref{Vmu} is a consequence of this result. This is immediate if one notices that function $x\to \frac1{x} e^{\sqrt{\log x}}$ is decreasing for $x\ge e$.
\end{remark}

\begin{remark}
Theorem \ref{eps} is the consequence of this result. In fact,   for any $\tau>0$
$$
\sqrt{ \log x } \le \tau \log x + c_\tau,\quad \forall x\ge 2\,.
$$
\end{remark}

\begin{proof}[Proof of Theorem~\ref{thm:surr-mar} assuming Lemma~\ref{lambda}]
By Lemma~\ref{lambda}, we have
$$
\cE_\delta[\mu]  \le 10\lambda|\mu| + \int \bI \vf \, d\mu = 10\lambda|\mu| + \int_{E_{3\lambda}}  \vf \, \bI^*\mu  \le  10\lambda|\mu| +\big(\int \vf^2\big)^{1/2} \cE_{3\lambda}[\mu]^{1/2}\,.
$$
So
$$
\cE_\delta[\mu]  \le  A_0^{1/2}\Big(\frac{\delta}{3\lambda}\Big)^{1/2}\big(\cE_\delta[\mu]\big)^{1/2} \cE_{3\lambda}[\mu]^{1/2} +10\lambda|\mu|\,.
$$
We solve this quadratic inequality with respect to $x:=  \big(\cE_\delta[\mu]\big)^{1/2}$ to obtain
\begin{equation}
\label{eq:cE-induction-step}
\cE_\delta[\mu] \le 2 A_0\Big(\frac{\delta}{3\lambda}\Big) \cE_{3\lambda}[\mu]  + 10\lambda |\mu|\,.
\end{equation}

Let us denote
\begin{equation}
\label{A}
A= \frac{\cE[\mu]}{|\mu|}\,.
\end{equation}
Let $\delta_{k} := A (4A_{0})^{-k(k+1)/2}$.
By induction on $k\geq 0$, we will show that
\[
\cE_{\delta_{k}}[\mu]
\leq
(4A_{0})^{k+1} \delta_{k} |\mu|,
\]
which, together with monotonicity of $\cE_{\delta}[\mu]$, implies the conclusion of the theorem.

For $k=0$, the claim holds trivially.
We have
\[
\frac{\delta_{k+1}}{\delta_{k}} = (4A_{0})^{-(k+1)(k+2)/2 + k(k+1)/2} = (4A_{0})^{-k-1} \leq 1/6,
\]
so that, using \eqref{eq:cE-induction-step} and the inductive hypothesis, we obtain
\begin{align*}
\cE_{\delta_{k+1}}[\mu]
&\leq
2 A_0 \frac{\delta_{k+1}}{\delta_{k}} \cE_{\delta_{k}}[\mu]  + 4 \delta_{k} |\mu|.
\\ &\leq
2 A_0 \cdot  \frac{\delta_{k+1}}{\delta_{k}} \cdot (4A_{0})^{k+1} \delta_{k} |\mu| + 4 \delta_{k} |\mu|
\\ &\leq
\Bigl( 2 A_0 (4A_{0})^{k+1} + 4 \cdot (4A_{0})^{k+1} \Bigr) \delta_{k+1} |\mu|
\\ &\leq
(4A_{0})^{k+2} \delta_{k+1} |\mu|.
\end{align*}
\end{proof}

\section{The lack of maximum principle}
\label{ce}

All measures and dyadic rectangles  below will be $N$-coarse.

In this section we build another example when Carleson condition holds, but restricted energy condition fails. 
But the example is more complicated (and more deep) than the previous one. In it the weight $\al$  again has values either $1$ or $0$, but the support $S$ of $\al$ is an {\it up-set}, that is, it contains every ancestor of every rectangle in $S$. 

Let $N$ below be $2^M$, $M\in \bZ_+$.
The example is based on the fact that potentials on bi-tree may not satisfy maximal principle. So we start with constructing $N$-coarse $\mu$ such that given a small $\delta>0$
\begin{equation}
\label{le1}
\bV^\mu \lesssim \delta\quad \text{on}\,\, \supp\mu,
\end{equation}
but  with an absolute strictly positive $c$
\begin{equation}
\label{logN}
\max \bV^\mu \ge \bV^\mu(\om_0) \ge c\, \delta\log N\,,
\end{equation}
where $\om_0:=[0, 2^{-N}]\times [0, 2^{-N}]$.

We define a collection of rectangles
\begin{equation}\label{e:773}
Q_j :=  [0,2^{-2^j}] \times  [0,2^{-2^{-j}N}] ,\quad j=1\dots M\approx \log N,
\end{equation}
and we let 
\begin{equation}\label{Qs}
\begin{split}
&Q_j^{++} :=  [2^{-2^j-1},2^{-2^j}]\times  [2^{-2^{-j}N-1},2^{-2^{-j}N}] \\
&Q_j^\ell := [0,2^{-2^j-1}] \times  [0,2^{-2^{-j}N}] ,\quad j=1\dots M\\
&Q_j^r := Q_j\setminus Q_j^\ell\\
&Q_j^t:= [0,2^{-2^j}]\times  [2^{-2^{-j}N-1},2^{-2^{-j}N}] \\
& Q_j^{--} := Q_j^\ell\setminus Q_j^t
\end{split}
\end{equation}
to be their upper right quadrants, lower halves, top halves, right halves, and lower quadrant respectively.
Now we put 
\begin{equation}\label{e:774}
\begin{split}
&\mathcal{R} := \{R:\; Q_j\subset R\; \textup{for some }j=1\dots M\}\\
&\alpha_{Q} := \chi_{\mathcal{R}}(Q)\\
&\mu(\omega) := \frac{\delta}{N}\sum_{j=1}^M \frac{1}{|Q_j^{++}|}\chi_{Q_j^{++}}(\omega),\\
& P_j= (2^{-2^j}, 2^{-2^{-j}N})\,.
\end{split}
\end{equation}
here $|Q|$ denotes the total amount of points $\omega \in (\partial T)^2\cap Q$, i.e. the amount of the smallest possible rectangles (of the size $2^{-2N}$) in $Q$.\par
Observe that on $Q_j$ the measure is basically a uniform distribution of the mass $\frac{\delta}{N}$ over the upper right quarter $Q_j^{++}$ of the rectangle $Q_j$ (and these quadrants are disjoint).
\par

To prove \eqref{le1} we fix $\om\in Q_j^{++}$ and split $\bV^\mu(\om)= \bV^\mu_{Q_j^{++}}(\om) +\mu(Q_j^t)+ \mu(Q_j^r)+ \bV^\mu(Q_j^{++})$, where the first term sums up $\mu(Q)$ for $Q$ between $\om$ and $Q_j^{++}$. This term obviously  satisfies $\bV^\mu_{Q_j^{++}}(\om) \lesssim \frac{\delta}{N}$. Trivially $\mu(Q_j^t)+ \mu(Q_j^r)\le  \frac{2\delta}{N} $. The non-trivial part is the estimate
\begin{equation}
\label{VQ}
\bV^\mu(Q_j^{++}) \lesssim \delta\,.
\end{equation}
To prove \eqref{VQ}, consider the sub-interval of interval $[1, n]$ of integers. We assume that $j\in [m, m+k]$.  We call by $C^{[m, m+k]}_j$ the family of dyadic rectangles containing $Q_j^{++}$ along with all $Q_i^{++}$, $i\in [m, m+k]$ (and none of the others). Notice that $C^{[m, m+k]}_j$ are not disjoint families, but this will be no problem for us as we  wish to
estimate $\bV^\mu(Q_j^{++})$ from above. 

Notice that, for example, $C^{[m, m+1]}_j$ are exactly the dyadic rectangles containing point $P_j$. It is easy to calculate that the number of such rectangles
is 
\[
(2^j+1)\cdot (2^{-j}N +1)\lesssim N\,.
\]

Analogously, dyadic rectangles in family $C^{[m, m+k]}_j$ have to contain  points $P_m, P_{m+k}$. Therefore,  each of such rectangles contains  point $(2^{-2^m}, 2^{-2^{-m-k}N})$.
The number of such rectangles is obviously at most $\lesssim 2^{-k}N$. The number of classes $C^{[m, m+k]}_j$ is at most $k+1$.

Therefore, $\bV^\mu((Q_j^{++})$ involves at most $(k+1)2^{-k}N$ times the measure in the amount $k\cdot \frac{\delta}{N}$. Hence
\[
\bV^\mu((Q_j^{++}) \le \sum_{k=1}^n k(k+1)2^{-k}N \cdot \frac{\delta}{N}\,,
\]
and \eqref{VQ} is proved. Inequality \eqref{le1} is also proved.

\medskip

We already denoted
\[
\om_0:=[0, 2^{-N}]\times [0, 2^{-N}]\,,
\]
 calculate now $\bV^\mu(\om_0)$. In fact, we will estimate it from below. The fact that $C^{[m, m+k]}_j$ are not disjoint may represent the problem now because  we wish estimate $\bV^\mu(\om_0)$ from below.
 
 To be more careful for every $j$ we denote now by $c_j$ the family of dyadic rectangles containing the point $P_j$ but not containing any other point
$P_i, i\neq j$. Rectangles in $c_j$ contain $Q_j^{++}$ but do not contain any of $Q_i^{++}$, $i\neq j$. There are 
$(2^{j} - 2^{j-1}-1)\cdot (2^{-j+1}N-2^{-j}N -1)$, $j=2, \dots, M-2$. This is at least $\frac18 N$.

But now families $c_j$ are disjoint, and rectangles of class $c_j$ contribute at least $\frac18 N\cdot \frac{\delta}{N}$ into the sum that defines $\bV^\mu(\om_0)$. W have $M-4$ such classes $c_j$, as $j=2, \dots, M-2$.
Hence,
\begin{equation}
\label{deltaM}
\bV^\mu(\om_0) \ge \frac18 N\cdot \frac{\delta}{N}\cdot (M-4)\ge\frac19 \delta M\,.
\end{equation}
Choose  $\delta$   to be a small absolute number $\delta_0$. Then we will have (see \eqref{le1})
\[
\bV^\mu \le 1, \quad\text{on}\,\, \supp\mu\,.
\]
But \eqref{deltaM}  proves also \eqref{logN} as $M\asymp \log N$.

\medskip

\begin{remark}
\label{Mx}
Notice that in this example $\bV^\mu\le 1$ on $\supp\mu$, and
\begin{equation}
\label{exp}
\text{cap} \{\om: \bV^\mu\ge \la\} \le c e^{-2\la}\,.
\end{equation}
Here capacity is the bi-tree capacity defined e. g. in \cite{AMPS18}. So there is no maximal principle for the bi-tree potential, but the set, where the maximal principle breaks down,  has small capacity.
\end{remark}

\bigskip



\begin{thebibliography}{ZZZZ}




\bibitem[AH]{AH} {\sc R. Adams, L. Hedberg}  {Function Spaces and Potential Theory}, Springer 1999.


\bibitem[AHMV]{AHMV} {\sc Nicola Arcozzi, Irina Holmes, Pavel Mozolyako, Alexander Volberg}. \textit{Bi-parameter embedding and measures with restriction energy condition},  Math. Ann. 377 (2020), no. 1-2, 643--674.

\bibitem[AMPS]{AMPS}{\sc Nicola Arcozzi, Pavel Mozolyako, Karl-Mikael Perfekt, Giulia Sarfatti}. \textit{Carleson measures for the Dirichlet space on the bidisc}, arXiv:1811.04990, pp. 1-44, 2018.


\bibitem[AMPVZ]{AMPVZ} {\sc Nicola Arcozzi, Pavel Mozolyako, Alexander Volberg, Pavel Zorin-Kranich}. \textit{Bi-parameter Carleson embeddings with product weights}, arXiv:1906.11150, pp. 1-24.

\bibitem[BP]{BP} {\sc A. Barron, J. Pipher},
{\em Sparse domination for bi-parameter operators using square functions},
Preprint, arXiv:1709.05009, 1--22.

\bibitem[Car]{Car} {\sc Lennart Carleson}, {\em A counter example for measures bounded on $H^p$ for the bi-disc}, Preprint (1974).


\bibitem[Ch]{Ch} {\sc Sun-Yang A. Chang}, {\em Carleson measure on the bi-disc}. Ann. of Math. (2) 109 (1979), no. 3, 613--620.

\bibitem[ChF1]{ChF1} {\sc Sun-Yung A. Chang;  Robert Fefferman}, {\em A continuous version of duality of H1 with BMO on the bidisc.} Ann. of Math. (2) 112 (1980), no. 1, 179--201.

\bibitem[ChF2]{ChF2}  {\sc Sun-Yung A Chang;  Robert Fefferman}, {\em  Some recent developments in Fourier analysis and $H^p$-theory on product domains}. Bull. Amer. Math. Soc. (N.S.) 12 (1985), no. 1, 1–43.

\bibitem[RF]{RF} {\sc R. Fefferman,} {\em Harmonic analysis on product spaces}, Ann. of Math., (2), v. 126, 1987, 109--130.



\bibitem[RF1]{RF1} {\sc R. Fefferman},   {\em Calder\'on-Zygmund theory for product domains: $H^p$ spaces}. Proc. Nat. Acad. Sci. U.S.A. v. 83 , no. 4, 1986, 840--843.

\bibitem[RF2]{RF2} {\sc R. Fefferman},   {\em Some recent developments in Fourier analysis and $H^p$ theory on product domains. II}. Function spaces and applications (Lund, 1986), 44–51, Lecture Notes in Math., 1302, Springer, Berlin, 1988. 


\bibitem[GT]{GT}{\sc L. Grafakos,  R. Torres}, {\em Multilinear Calder\'on--Zygmund theory}. Adv. Math. 165, 2002, 124--164.



     
  \bibitem[JLJ]{JLJ}{\sc J.-L. Journ\'e}, {\em  Two problems of Calder\'on-Zygmund theory on product-spaces}. Ann. Inst. Fourier (Grenoble), v. 38, 1988, no. 1, 111--132. 
  
 \bibitem[JLJ2]{JLJ2}{sc J.-L. Journ\'e},  {\em Caldero\'on--Zygmund operators on product spaces}. Rev. Mat. Iberoamericana 1, 1985, 55--91.

 \bibitem[LSSUT]{LSSUT} {\sc M. T. Lacey,  E.  T.  Sawyer, C.-Y. Shen, I. Uriarte-Tuero}, 
{\em Two-weight inequality for the Hilbert transform: a real variable characterization}, I.
Duke Math. J. 163 (2014), no. 15, 2795--2820. 

\bibitem[L]{L} {\sc M. T. Lacey}, {\em Two-weight inequality for the Hilbert transform: a real variable characterization}, II. Duke Math. J. 163 (2014), no. 15, 2821--2840.

 \bibitem[MPV]{MPV} {\sc P. Mozolyako; G. Psaromiligkos; A. Volberg,} {\em Counterexamples for multi-parameter weighted paraproducts.} C. R. Math. Acad. Sci. Paris 358 (2020), no. 5, 529--534.

\bibitem[MPVZ]{MPVZ} {\sc P. Mozolyako; G. Psaromiligkos; A. Volberg; P. Zorin-Kranich,} {\em Combinatorial property of all positive measures in dimensions 2 and 3.} C. R. Math. Acad. Sci. Paris 358 (2020), no. 6, 721--725. 

\bibitem[MPVZ1]{MPVZ1}{\sc P. Mozolyako; G. Psaromiligkos; A. Volberg; P. Zorin-Kranich,} {\em Carleson embedding in
 on tri-tree and on tri-disc}, preprint, arXiv:2001.02373, pp. 1--33.

\bibitem[MPTT1]{MPTT1}{\sc C. Muscalu, J. Pipher, T. Tao, C. Thiele}, {\em Bi-parameter paraproducts}, Acta Math., 193 (2004), 269--296.

\bibitem[MPTT2]{MPTT2}{\sc C. Muscalu, J. Pipher, T. Tao, C. Thiele}, {\em Multi-parameter paraproducts},   Rev. Mat. Iberoamericana 22 (2006), no. 3, 963–976.

\bibitem[NTV99]{NTV99} F. Nazarov, S. Treil, and A. Volberg. The Bellman functions and two-weight in- equalities for Haar multipliers. J. Amer. Math. Soc. 12,  1999, pp. 909–928.

\bibitem[NTV08]{NTV08} F. Nazarov, S. Treil, A. Volberg, 
Two weight inequalities for individual Haar multipliers and other well localized operators.
Math. Res. Lett. 15 (2008), no. 3, 583--597. 



\bibitem[P]{P}{\sc J. Pipher} {\em Journ\'e's covering lemma and its extension to higher dimensions}, Duke Math. J., 53 , no.  3 (1986), 683--690.



\bibitem[Tao]{Tao}{\sc T. Tao}, {\em Dyadic product $H^1$, $BMO$, and Carleson's counterexample}, preprint, pp. 1--12.
www.math.ucla.edu/~tao/preprints/Expository/product.dvi.

\bibitem[Verb]{Verb} {\sc Igor E. Verbitsky}, {\em embedding and multiplier theorems for discrete Littlewood-Paley spaces}.
Pacific J. Math., v. 176, no. 2, 1996, 529--556.



%
%





\end{thebibliography}
\end{document}